\theoremstyle{plain}
\newtheorem{theor}{Theorem} \newtheorem{lem}{Lemma} \newtheorem{cor}{Corollary}
\newtheorem{prop}{Proposition}
\theoremstyle{remark} \newtheorem{rem}{Remark} \theoremstyle{definition}
\newtheorem{defin}{Definition} \newtheorem{ex}{Example}
\begin{document}
\title{Chebyshev polynomials, Zolotarev polynomials and plane trees}
\author{Yury Kochetkov}
\date{19.12.2012}
\email{yuyk@prov.ru}  \maketitle
\begin{abstract}{A polynomial with exactly two
critical values is called a generalized Chebyshev polynomial. A polynomial with
exactly three critical values is called a Zolotarev polynomial. Two Chebyshev
polynomials $f$ and $g$ are called Z-homotopic, if there exists a family
$p_\alpha$, $\alpha\in [0,1]$, where $p_0=f$, $p_1=g$ and $p_\alpha$ is a
Zolotarev polynomial, if $\alpha\in (0,1)$. As each Chebyshev polynomial
defines a plane tree (and vice versa), Z-homotopy can be defined for plane
trees. In this work we prove some necessary geometric conditions for plane
trees Z-homotopy, describe Z-homotopy for trees with 5 and 6 edges and study
one interesting example in the class of trees with 7 edges.}\end{abstract}

\section{Introduction}

\subsection{Generalized Chebyshev polynomials} Polynomial $p(z)\in\mathbb C[z]$
is called a generalized Chebyshev polynomial if it has exactly two finite
critical values --- $\alpha$ and $\beta$ (in what follows we will call such
polynomial simply a Chebyshev polynomial). If $p(z)$ is a Chebyshev polynomial,
then the set $p^{-1}[\alpha,\beta]$ is a plane connected tree $T_p$ (see,
\cite{1}, for example). Inverse images of points $\alpha$ and $\beta$ are
vertices of tree $T_p$ and the degree of a vertex equals to the multiplicity of
the corresponding critical point (a vertex of degree 1 is a simple root of
polynomial $p(z)-\alpha$ or $p(z)-\beta$). Also for each plane tree $T$ there
exists a Chebyshev polynomial $p(z)$, defined up to linear change of variable
$z$ and variable $u=p(z)$, such that trees $p^{-1}[\alpha,\beta]$ and $T$ are
isotopic. Such polynomial $p(z)$ will be called a \emph{polynomial that defines
the tree} $T$.

Vertices of a plane tree $T$ can be painted in two colors
--- black and white so, that colors of any two adjacent vertices are different.
Such painting will be called a \emph{binary structure} of $T$. Obviously,
vertices of one color are inverse images of $\alpha$ and vertices of another
another color --- of $\beta$.

The type (or passport) of plane tree with binary structure is two sequences of
multiplicities of white vertices and black vertices, respectively, in
nonincreasing order. Thus the type of the tree
\[\begin{picture}(75,40) \put(0,5){\circle*{3}} \put(0,35){\circle*{3}}
\put(15,20){\circle{4}} \put(35,20){\circle*{3}} \put(55,20){\circle{4}}
\put(75,20){\circle*{3}} \put(0,5){\line(1,1){14}} \put(0,35){\line(1,-1){14}}
\put(17,20){\line(1,0){36}} \put(57,20){\line(1,0){18}} \end{picture}\] is
$\langle 3,2\,|\,2,1,1,1\rangle$.

\begin{rem} Often it is assumed that numbers $\alpha$ and $\beta$ are 0 and 1.
\end{rem}

\subsection{Zolotarev polynomials} A polynomial $p\in\mathbb{C}[z]$ is called a
\emph{Zolotarev polynomial} if it has exactly three finite critical values. If
$p$ is a Zolotarev polynomial, ${\rm deg}(p)=n$, $\alpha$, $\beta$ and $\gamma$
its critical values and $C$ is a simple arc $C\subset\mathbb{C}$, that connects
points $\alpha$, $\beta$ and $\gamma$, then $p^{-1}(C)$ is a connected plane
tree with $2n$ edges. Here points from the set $p^{-1}\{\alpha,\beta,\gamma\}$
are vertices of this tree and degree of a vertex $v$, $p(v)=\alpha$, equals to
multiplicity of critical point $v$, if $\alpha$ is an endpoint of $C$, or to
double multiplicity, if $\alpha$ is an interior point. Vertices of the tree
$p^{-1}(C)$ can be painted in three colors: white, black and grey, where white
vertices are inverse images of the interior (with respect to arc $C$) critical
value. One vertex of each edge is white and other --- black or grey.

\begin{rem} Arcs $C_1$ and $C_2$, that connect points $\alpha$,
$\beta$ and $\gamma$, can be isotopically nonequivalent
\[\begin{picture}(140,50) \put(0,30){\circle*{2}} \put(20,30){\circle{4}}
\put(40,30){\circle*{4}} \put(0,30){\line(1,0){18}} \put(22,30){\line(1,0){18}}
\put(18,2){$C_1$} \put(55,28){or} \put(80,30){\circle*{2}}
\put(100,30){\circle{4}} \put(120,30){\circle*{4}} \put(80,30){\line(0,1){15}}
\put(80,45){\line(1,0){60}} \put(140,45){\line(0,-1){30}}
\put(140,15){\line(-1,0){40}} \put(100,15){\line(0,1){13}}
\put(102,30){\line(1,0){18}} \put(108,2){$C_2$} \put(-2,35){$\beta$}
\put(18,35){$\alpha$} \put(38,35){$\gamma$} \put(78,20){$\beta$}
\put(98,35){$\alpha$} \put(118,35){$\gamma$} \end{picture}\] for example. In
this case trees $p^{-1}(C_1)$ and $p^{-1}(C_2)$ also can be isotopically
nonequivalent.
\end{rem}
The \emph{passport} of Zolotarev polynomial is three sequences of
multiplicities of its critical points that correspond to the first, the second
and the third critical value, respectively. Multiplicity sequences will be
written in the nonincreasing order $\langle
k_1,k_2,\ldots\,|\,l_1,\l_2,\ldots\,|\,m_1,m_2,\ldots\rangle$. Critical points
of polynomial $p=x^2(x-1)^2(3x-1)$, for example, are $0,1,2/3$ and $1/5$ with
values $0,0,4/81$ and $-32/3125$, respectively. So $\langle
2,2\,|\,2\,|\,2\rangle$ is the passport of $p$.

\section{Z-homotopy}

\begin{defin} Two trees $T_1$ and $T_2$ will be called Z-homotopic if
there exists a continuous family $p_\lambda\in\mathbb{C}[z]$, $\lambda\in
[0,1]$, such that
\begin{itemize}
    \item all polynomials $p_\lambda$ has the same degree;
    \item polynomial $p_0$ is a Chebyshev polynomial and defines the tree
    $T_1$;
    \item polynomial $p_1$ is a Chebyshev polynomial and defines the tree
    $T_2$;
    \item polynomials $p_\lambda$, $\lambda\neq 0,1$, are Zolotarev
    polynomials, but \emph{not} Chebyshev polynomials.
\end{itemize}
\end{defin}

\begin{ex} Let us study the Z-homotopy problem on the set of 5-edge
trees. There are five of them:
\[\begin{picture}(210,60) \put(0,35){\circle*{2}} \put(15,35){\circle{4}}
\put(15,20){\circle*{2}} \put(15,50){\circle*{2}} \put(30,35){\circle*{2}}
\put(45,35){\circle{4}} \put(0,35){\line(1,0){13}}
\put(17,35){\line(1,0){26}}\put(15,20){\line(0,1){13}}
\put(15,37){\line(0,1){13}} \put(20,5){$T_1$}

\put(75,20){\circle*{2}} \put(75,50){\circle*{2}} \put(90,35){\circle{4}}
\put(105,35){\circle*{2}} \put(120,35){\circle{4}} \put(135,35){\circle*{2}}
\put(75,20){\line(1,1){14}} \put(75,50){\line(1,-1){14}}
\put(92,35){\line(1,0){26}} \put(122,35){\line(1,0){13}} \put(100,5){$T_2$}

\put(165,20){\circle*{2}} \put(165,50){\circle*{2}} \put(180,35){\circle{4}}
\put(195,35){\circle*{2}} \put(210,20){\circle{4}} \put(210,50){\circle{4}}
\put(165,20){\line(1,1){14}} \put(165,50){\line(1,-1){14}}
\put(182,35){\line(1,0){13}} \put(195,35){\line(1,1){14}}
\put(195,35){\line(1,-1){14}} \put(185,5){$T_3$}\end{picture}\]

\[\begin{picture}(150,60)
\put(0,35){\circle*{2}} \put(15,35){\circle{4}} \put(30,25){\circle*{2}}
\put(30,45){\circle*{2}} \put(45,15){\circle{4}} \put(45,55){\circle{4}}
\put(0,35){\line(1,0){13}} \put(16,36){\line(3,2){28}}
\put(16,34){\line(3,-2){28}} \put(20,5){$T_4$}

\put(75,35){\circle*{2}} \put(90,35){\circle{4}} \put(105,35){\circle*{2}}
\put(120,35){\circle{4}} \put(135,35){\circle*{2}} \put(150,35){\circle{4}}
\put(75,35){\line(1,0){13}} \put(75,35){\line(1,0){13}}
\put(92,35){\line(1,0){26}} \put(122,35){\line(1,0){26}} \put(110,5){$T_5$}
\end{picture}\]

Let $p=\int x^2(x-1)(x-a)\,dx$. Critical points of $p$ are $0,1$ and $a$ and
$0$, $5a-3$ and $a^4(5-3a)$ are corresponding critical values. If
\begin{itemize}
    \item $a=0$, then $p$ is a Chebyshev polynomial that defines the tree $T_1$;
    \item $a=1$, then $p$ is a Chebyshev polynomial that defines the tree $T_3$;
    \item $a=3/5$, then $p(1)=0$ and $p$ is a Chebyshev polynomial that defines
    the tree $T_2$;
    \item $a=5/3$, then $p(a)=0$ and $p$ is a Chebyshev polynomial that defines
    the tree $T_2$;
    \item $a=(-2\pm\sqrt{5}\,i)/3$, then $p(a)=p(1)$ and $p$ is a Chebyshev
    polynomial that defines the tree $T_4$.
\end{itemize}
For all other values of parameter $a$ the polynomial $p$ is Zolotarev
polynomial. Thus deformations of parameter $a$ allows one to realize pairwise
Z-homotopies between trees $T_1$, $T_2$, $T_3$ and $T_4$. For example the
following deformation of tree corresponds to the increase of parameter $a$ from
$0$ to $3/5$ (arc $C$ in this case is the segment, that connects critical
values $5a-3$ and $a^3(5-3a)$):

\[\begin{picture}(232,150) \put(0,115){\circle*{4}} \put(30,115){\circle{4}}
\put(30,145){\circle*{4}} \put(30,85){\circle*{4}} \put(75,115){\circle*{4}}
\put(90,115){\circle{4}} \put(0,115){\line(1,0){28}}
\put(30,117){\line(0,1){28}} \put(30,85){\line(0,1){28}}
\put(32,115){\line(1,0){56}} \put(115,112){$\Rightarrow$}

\put(150,115){\circle*{4}} \put(170,115){\circle{4}} \put(170,135){\circle*{4}}
\put(170,95){\circle*{4}} \put(215,115){\circle*{4}} \put(225,115){\circle{4}}
\put(232,115){\circle*{2}} \put(180,115){\circle*{2}}
\put(163,122){\circle*{2}} \put(163,108){\circle*{2}}
\put(150,115){\line(1,0){18}} \put(170,117){\line(0,1){18}}
\put(170,95){\line(0,1){18}} \put(172,115){\line(1,0){51}}
\qbezier(163,122)(166,119)(169,116) \qbezier(163,108)(166,111)(169,114)
\put(227,115){\line(1,0){5}} \put(190,70){$\Downarrow$}

\put(160,35){\circle*{4}} \put(170,35){\circle{4}} \put(170,45){\circle*{4}}
\put(170,25){\circle*{4}} \put(215,35){\circle*{4}} \put(220,35){\circle{4}}
\put(230,35){\circle*{2}} \put(190,35){\circle*{2}} \put(157,48){\circle*{2}}
\put(157,22){\circle*{2}} \put(160,35){\line(1,0){8}}
\put(170,37){\line(0,1){8}} \put(170,25){\line(0,1){8}}
\put(172,35){\line(1,0){46}} \put(222,35){\line(1,0){8}}
\put(157,48){\line(1,-1){12}} \put(157,22){\line(1,1){12}}
\put(115,32){$\Leftarrow$}

\put(5,55){\circle*{2}} \put(5,15){\circle*{2}} \put(25,35){\circle{4}}
\put(55,35){\circle*{2}} \put(70,35){\circle{4}} \put(85,35){\circle*{2}}
\put(5,55){\line(1,-1){19}} \put(5,15){\line(1,1){19}}
\put(27,35){\line(1,0){41}} \put(72,35){\line(1,0){13}}
\end{picture}\]

Trees  $T_1$, $T_2$ and $T_4$ are Z-homotopic to tree $T_5$. Indeed, let us
consider the polynomial $p(x)=\int x(x-1)(x-a)(x-b)\,dx$. If $p(a)=p(0)$,
$a\neq 2$, then this polynomial is a Zolotarev polynomial (here
$b=(3a^2-5a)/(5a-10)$). However, for some values of parameter $a$ polynomial
$p$ degenerates into Chebyshev polynomial. Indeed,
\begin{enumerate}
    \item  if $a=0$, then $b=0$, and we have a Chebyshev polynomial, that
    defines the tree $T_1$;
    \item if $a=1$, then $b=2/5$ and $p(1)=0$, and we have a Chebyshev
    polynomial, that defines the tree $T_2$;
    \item if $a=5/3$, then $b=0$, and we have a Chebyshev
    polynomial, that defines the tree $T_2$;
    \item if $a=\pm\sqrt 5$, then $b=1\pm\sqrt 5$ and $p(1)=p(b)$, and we
    have a Chebyshev polynomial, that defines the tree $T_5$;
    \item if $a=(5\pm\sqrt 5)/4$, then $b=-(1\pm\sqrt 5)/4$ and $p(1)=p(b)$,
    and we have a Chebyshev polynomial, that defines the tree $T_5$;
    \item if $a=(5\pm\sqrt 5\,i)/3$, then $b=1$, and we have a Chebyshev
    polynomial, that defines the tree $T_4$.
\end{enumerate}
Thus, a deformation of parameter $a$ allows us to construct a Z-homotopy
between trees $T_1$ and $T_5$, $T_2$ and $T_5$, $T_4$ and $T_5$.

Trees $T_3$ and $T_5$ are not Z-homotopic. This statement will be proved in the
next section. Also it is a consequence of results in section "Theorems".
\end{ex}

\section{Geometry of space of Zolotarev polynomials of degree 5}

Let $q=x^4+ax^2+bx+c$ and $p=\int q\,dx$. The polynomial $p$ is a Zolotarev
polynomial if among numbers $p(x_1),p(x_2),p(x_3),p(x_4)$, where
$x_1,x_2,x_3,x_4$ are roots of $q$, there are only three different. In this
case the polynomial $s(y)=(y-p(x_1))(y-p(x_2))(y-p(x_3))(y-p(x_4))$ has a
multiple root., i.e. its discriminant is zero. This discriminant is reducible:
\begin{multline*}
(1280a^6-32256a^4c+9504a^3b^2+269568a^2c^2-69984ab^2c-19683b^4-746496c^3)
\times\\ (16a^4c-4a^3b^2-128a^2c^2+144ab^2c-27b^4+256c^3)=0.\end{multline*} We
see that the variety of Zolotarev polynomials of degree 5 is reducible and has
two components $C_1$ and $C_2$. The second factor, that defines the component
$C_2$, is simply the discriminant of polynomial $q$.

Intersection $C_1\cap C_2$ is the union of 3 components.
\begin{itemize}
    \item Polynomials that belong to the first component are Chebyshev polynomials that
          define the tree $T_4$.
    \item Polynomials that belong to the second component are Chebyshev polynomials
          that define the tree $T_2$.
    \item Polynomials that belong to the third component are Chebyshev polynomials
          that define the tree $T_1$.
\end{itemize}

A Chebyshev polynomial $p_0$ that defines $T_5$ belongs only to the first
component $C_1$ and a Chebyshev polynomial $p_1$ that defines $T_3$ belongs
only to the second component $C_2$. Thus a family of Zolotarev polynomials
which connect $p_0$ and $p_1$ must also contain one of Chebychev polynomials in
$C_1\cap C_2$. But then this family is not a Z-homotopy.

\section{Theorems}

In this section we will prove a necessary condition for Z-homotopy existence
(i.e. a sufficient condition for its absence).

\begin{lem}  Let $p_\lambda$, $0<\lambda<1$ be a continuous family of
Zolotarev polynomials of degree $n$. Then passports of all these polynomials
are the same.\end{lem}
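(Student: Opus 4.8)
The plan is to reduce the statement to a general fact about continuous families of one–variable polynomials: if the number of \emph{distinct} roots stays constant along such a family, then the entire multiplicity partition of the roots stays constant. I would apply this fact twice — first to locate the three critical values and see how the total ramification is distributed among them, and then, over each critical value separately, to recover the finer partition recorded by the passport.

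First I would form the resultant
\[
\Phi_\lambda(u)=\mathrm{Res}_z\bigl(p_\lambda(z)-u,\;p_\lambda'(z)\bigr),
\]
a polynomial in $u$ whose coefficients are polynomials in the (continuously varying) coefficients of $p_\lambda$, hence depend continuously on $\lambda$. Since $\deg p_\lambda=n$ is fixed, $\mathrm{lc}(p_\lambda')\neq 0$, so $\Phi_\lambda$ has constant degree $n-1$ with nonvanishing leading coefficient; up to a nonzero factor $\Phi_\lambda(u)=\prod_\xi\bigl(p_\lambda(\xi)-u\bigr)^{m_\xi}$, the product taken over the distinct roots $\xi$ of $p_\lambda'$ with their multiplicities $m_\xi$. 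Thus the distinct roots of $\Phi_\lambda$ are exactly the critical values, and the multiplicity of a critical value $c$ equals the total ramification $R_c=\sum_{p_\lambda(\xi)=c}(e_\xi-1)$ over it, where $e_\xi=m_\xi+1$ is the vertex degree. Being Zolotarev, $\Phi_\lambda$ has exactly three distinct roots for every $\lambda$. Applying the general fact gives that the triple $(R_\alpha,R_\beta,R_\gamma)$ is constant, and that the three critical values can be followed by continuous branches $\alpha(\lambda),\beta(\lambda),\gamma(\lambda)$, each of constant multiplicity in $\Phi_\lambda$.

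Next, to control the finer data over, say, $\alpha(\lambda)$, I would set $h_\lambda(z)=p_\lambda(z)-\alpha(\lambda)$, again a continuous family of degree-$n$ polynomials. Its roots are the fibre over $\alpha$, and its number of distinct roots is $n-R_\alpha$, which is constant by the previous step. The general fact then forces the multiplicity partition of $h_\lambda$ to be constant; the parts that are $\geq 2$ are precisely the local degrees $e_\xi$ of the critical points lying over $\alpha$, i.e.\ the block of the passport attached to $\alpha$. Running the same argument over $\beta(\lambda)$ and $\gamma(\lambda)$ shows that the whole passport is constant on the connected interval $(0,1)$.

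It remains to prove the general fact, which is the heart of the argument and the one step demanding care. At a fixed $\lambda_0$ let $\rho_1,\dots,\rho_k$ be the distinct roots, of multiplicities $\mu_1,\dots,\mu_k$. Choosing small disjoint disks $U_i$ around the $\rho_i$, Hurwitz's theorem (the argument principle) shows that for $\lambda$ near $\lambda_0$ the polynomial has exactly $\mu_i$ roots, counted with multiplicity, inside $U_i$, and none outside $\bigcup U_i$. Since by hypothesis there are exactly $k$ distinct roots in total and each $U_i$ contains at least one, each $U_i$ must contain exactly one distinct root, necessarily of multiplicity $\mu_i$; hence the partition is locally constant, therefore constant. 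The two points to verify are that roots cannot escape to infinity (degree and leading coefficient are constant, so the roots stay bounded on compact subintervals) and that it is genuinely the constancy of the number of distinct roots — guaranteed by the Zolotarev hypothesis, first for $\Phi_\lambda$ and then for each $h_\lambda$ — that pins the multiplicities down.
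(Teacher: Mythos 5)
Your proof is correct. The paper's own argument rests on the same underlying rigidity but packages it differently: it notes that the critical values vary continuously, asserts that a passport change could only arise from a collision of critical points within one fibre, and then kills any such collision by a direct count --- for a degree-$n$ Zolotarev polynomial the identity $\sum(k_i-1)+\sum(l_i-1)+\sum(m_i-1)=n-1$ forces the total number $r+s+t$ of distinct preimages of the three critical values to equal $2n+1$, while a collision would strictly decrease it. You reach the same conclusion by a two-stage application of the lemma ``constant number of distinct roots $\Rightarrow$ constant multiplicity partition'': first to the resultant $\Phi_\lambda$, whose three distinct roots are the critical values (pinning the ramification $R_c$ over each and yielding continuous branches $\alpha(\lambda),\beta(\lambda),\gamma(\lambda)$), and then to each fibre polynomial $p_\lambda-\alpha(\lambda)$, whose number of distinct roots $n-R_\alpha$ is thereby held fixed. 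What your route buys is precisely the steps the paper leaves implicit: the Hurwitz/Rouch\'e argument substantiates both the continuity of the critical values and the claim that the only possible degeneration is a collision of roots, and the constancy of the distinct-root count is what forbids it. The paper's version is shorter because it takes these for granted and works with a single global count; yours is longer but self-contained and isolates a reusable general lemma.
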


\begin{proof} Let $a_\lambda$, $b_\lambda$ ¨ $c_\lambda$ be critical values of
polynomial $p_\lambda$. They are continuous functions of parameter $\lambda$. A
change of passport during increase or decrease of parameter $\lambda$ can occur
only in the case of collision of roots of polynomial $p_\lambda-a_\lambda$ (or
$p_\lambda-b_\lambda$, or $p_\lambda-c_\lambda$): two roots $x'_\lambda$ and
$x''_\lambda$ of polynomial $p_\lambda-a_\lambda$ of multiplicities $k'$ and
$k''$, respectively, approach to each other, when $\lambda\rightarrow \mu$, and
generate a root $x_\mu$ of polynomial $p_\mu-a_\mu$ of multiplicity $k'+k''-1$.

Let the passport of $p_\lambda$ be $\langle
k_1,\ldots,k_r\,|\,l_1,\ldots,l_s\,|\,m_1,\ldots,m_t\rangle$. Then
$$\sum_{i=1}^r k_i=n,\quad \sum_{i=1}^s l_i=n,\quad \sum_{i=1}^tm_i=n$$ and
$$\sum_{i=1}^r(k_i-1)+\sum_{i=1}^s(l_i-1)+\sum_{i=1}^t(m_i-1)=n-1.$$ Hence,
$r+s+t=2n+1$. But the collision of roots diminishes the number $r$ and violates
the above equality. \end{proof}

\begin{rem} We see, that it is more correct to speak not about Z-homotopy, but
about Z-homotopy in the class of Zolotarev polynomials with a given passport.
Thus, trees $T_1$, $T_2$, $T_3$ and $T_4$ with 5 edges are pairwise Z-homotopic
in the class of Zolotarev polynomials with the passport $\langle
3\,|\,2\,|\,2\rangle$ and trees $T_1$ and $T_5$, $T_2$ and $T_5$, $T_4$ and
$T_5$ are Z-homotopic in the class of Zolotarev polynomials with the passport
$\langle 2,2\,|\,2\,|\,2\rangle$. \end{rem}

\begin{lem} Let $p_\lambda$, $0\leqslant\lambda<1$, be a continuous
family of polynomials of degree $n$, where $p_0$ is a Chebyshev polynomial and
$p_\lambda$, $\lambda>0$, are Zolotarev polynomials (but not Chebyshev
polynomials). Let us assume that a critical point $a$ of polynomial $p_0$ of
multiplicity $k$ generates $m$, $m>1$, critical points
$a_1(\lambda),\ldots,a_m(\lambda)$ in the family $p_\lambda$ with
multiplicities $k_1,\ldots,k_m$. Then numbers $p_\lambda(a_1(\lambda)),\ldots,
p_\lambda(a_m(\lambda))$ cannot all be equal.
\end{lem}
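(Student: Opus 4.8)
The plan is to argue by contradiction with a zero-counting argument for the family $p_\lambda - w(\lambda)$ near the point $a$, where $w(\lambda)$ denotes the supposedly common critical value. First I would record the bookkeeping that governs how many zeros collide at $a$. In the paper's sense a critical point of multiplicity $\kappa$ is a zero of the derivative of order $\kappa - 1$. Thus $a$ is a zero of $p_0'$ of order $k-1$, while each $a_i(\lambda)$ is a zero of $p_\lambda'$ of order $k_i - 1$. Since $p_\lambda' \to p_0'$ and the points $a_i(\lambda)$ are exactly the zeros of $p_\lambda'$ tending to $a$, continuity of the number of zeros in a small disc (the argument principle) gives $\sum_{i=1}^m (k_i - 1) = k - 1$, that is $\sum_{i=1}^m k_i = k + m - 1$.

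Next, suppose for contradiction that $p_\lambda(a_1(\lambda)) = \cdots = p_\lambda(a_m(\lambda)) =: w(\lambda)$ for all small $\lambda > 0$. Because the family is continuous and $a_i(\lambda) \to a$ as $\lambda \to 0$, we have $w(\lambda) \to p_0(a) = \alpha$. By the definition of its multiplicity, each $a_i(\lambda)$ is a zero of $p_\lambda - p_\lambda(a_i(\lambda)) = p_\lambda - w(\lambda)$ of order $k_i$. Hence $p_\lambda - w(\lambda)$ has at least $\sum_{i=1}^m k_i = k + m - 1$ zeros, counted with multiplicity, clustered near $a$.

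The crux is then a Rouch\'e estimate showing this is too many. I would fix a closed disc $D$ about $a$ on whose boundary $p_0 - \alpha$ does not vanish; since $a$ is a zero of $p_0 - \alpha$ of order exactly $k$ and the only zero inside $D$, the function $p_0 - \alpha$ has exactly $k$ zeros in $D$. As $\lambda \to 0$ the functions $p_\lambda - w(\lambda)$ converge to $p_0 - \alpha$ uniformly on $D$, so for $\lambda$ small enough the modulus of their difference is smaller than $\lvert p_0 - \alpha\rvert$ on $\partial D$; Rouch\'e's theorem then forces $p_\lambda - w(\lambda)$ to have exactly $k$ zeros in $D$ as well. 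For such $\lambda$ all the $a_i(\lambda)$ lie in $D$, giving $k + m - 1 \leqslant k$, which contradicts $m > 1$ and proves the lemma.

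I expect the main obstacle to be the careful setup of this comparison: one must choose $D$ so that $p_0 - \alpha$ is bounded away from zero on $\partial D$, and confirm that the moving target $w(\lambda)$ tends to $\alpha$ rather than to another critical value --- which is precisely what continuity of the family together with $a_i(\lambda) \to a$ guarantees. Everything else reduces to the elementary multiplicity bookkeeping above.
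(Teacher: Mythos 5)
Your proposal is correct and follows essentially the same route as the paper: assume the common value $w(\lambda)\to\alpha=p_0(a)$, use the conservation $\sum(k_i-1)=k-1$ for the zeros of $p_\lambda'$ near $a$, and conclude that $p_\lambda-w(\lambda)$ would have $\sum k_i=k+m-1>k$ zeros clustering at $a$, contradicting the fact that $a$ is a root of $p_0-\alpha$ of multiplicity exactly $k$. The paper states this limit argument tersely; your Rouch\'e justification merely makes the same step explicit.
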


\begin{proof} Let us assume that the opposite is true:
$$p_\lambda(a_1(\lambda))=\ldots=p_\lambda(a_m(\lambda))=\alpha(\lambda).$$
Let $\lambda\to 0$. Then
$$a_i(\lambda)\to a,\,i=1,\ldots,m,\text{ and } \alpha(\lambda)\to
\alpha=p_0(a).$$ But $k-1=(k_1-1)+\ldots+(k_m-1)$, so $a$ is a root of
polynomial $p_0-\alpha$ of multiplicity $k+m-1$. We have a contradiction.
\end{proof}

\begin{defin} A tree is called a \emph{chain}, if valences of all its
vertices are $\leqslant 2$.\end{defin}

\begin{theor} If a tree $T$ has a white vertex $a$ of degree $\geqslant
3$ and a black vertex $b$ of degree $\geqslant 3$, then it cannot be
Z-homotopic to a chain.\end{theor}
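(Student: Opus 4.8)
The plan is to argue by contradiction, first pinning down the common passport of the Zolotarev family from the chain end and then deriving an impossibility at the $T$ end from the two high-degree vertices. So suppose $p_\lambda$, $\lambda\in[0,1]$, is a Z-homotopy with $p_0$ defining $T$ and $p_1$ defining a chain. Write $\alpha$ (white) and $\beta$ (black) for the two critical values of $p_0$; by hypothesis $p_0$ has a critical point $a$ over $\alpha$ and a critical point $b$ over $\beta$, each of multiplicity $\geqslant 3$.

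First I would analyze the chain end $\lambda\to 1$. Since a chain is a path, every critical point of $p_1$ has multiplicity exactly $2$, i.e. $p_1'$ has only simple roots. As $\lambda\to 1$ the roots of $p_\lambda'$ converge to those of $p_1'$, and since each root of $p_1'$ is simple, exactly one (simple) root of $p_\lambda'$ approaches it; hence for $\lambda$ near $1$ every critical point of $p_\lambda$ again has multiplicity $2$. By Lemma 1 the passport is constant on $(0,1)$, so in fact $p_\lambda'$ has only simple roots for every $\lambda\in(0,1)$; equivalently, all critical points of every $p_\lambda$ have multiplicity $2$.

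Next I would look at the end $\lambda\to 0$. The critical point $a$ is a root of $p_0'$ of multiplicity $\geqslant 2$, so it is the limit of at least two distinct simple roots of $p_\lambda'$; that is, $a$ splits into $m\geqslant 2$ multiplicity-$2$ critical points of $p_\lambda$, and likewise $b$ splits into at least two of them. Now I would track the three critical values of $p_\lambda$ as $\lambda\to 0$: their set converges to the two-element set $\{\alpha,\beta\}$, so exactly two of them tend to one of $\alpha,\beta$ and the remaining one tends to the other. If the lone value tends to $\beta$, then every critical point near $b$ must carry that single value for small $\lambda$ (the other two values are near $\alpha\neq\beta$), so all of $b$'s offspring share one value, contradicting Lemma 2. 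If instead the lone value tends to $\alpha$, the same argument applied to $a$ and its offspring gives the contradiction. Either way we are done, and this is precisely why we need a high-degree vertex of each colour: whichever value is the ``lone'' one, the hypothesis supplies a degree-$\geqslant 3$ vertex of the matching colour.

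The step I would be most careful about is the behaviour of the critical values as $\lambda\to 0$: that the three distinct values collapse onto $\{\alpha,\beta\}$ with a clean two-to-one and one-to-one split, so that ``converging to $\beta$'' forces equality with the unique value near $\beta$. The multiplicity bookkeeping is automatically consistent with Lemma 2 (a multiplicity-$k$ point producing $m=k-1$ multiplicity-$2$ points would coalesce to multiplicity $2(k-1)$, which exceeds $k$ exactly when $k\geqslant 3$), which is what makes the degree-$\geqslant 3$ hypothesis essential rather than incidental.
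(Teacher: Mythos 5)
Your proof is correct and follows essentially the same route as the paper: Lemma 1 forces every critical point of $p_\lambda$ to have multiplicity $2$, so $a$ and $b$ each split into at least two critical points, and Lemma 2 then yields the contradiction. The only cosmetic difference is in the final count --- the paper applies Lemma 2 to both $a$ and $b$ to get at least $2+2=4>3$ critical values, while you split the three critical values as $2+1$ over $\{\alpha,\beta\}$ and apply Lemma 2 to whichever vertex receives the lone value; the two arguments are logically equivalent.
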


\begin{proof} Let us assume that the opposite is true. Then there exist a
Z-homotopy connecting a Chebyshev polynomial $p_0$, that defines $T$, with a
Chebyshev polynomial $p_1$, that defines the chain. It means that critical
points $a$ and $b$ in the family $p_\lambda$ generated critical points
$a_1,\ldots,a_m$ and $b_1,\ldots,b_n$, respectively, all of them of
multiplicity 2. Let $p_0(a)=\alpha$ and $p_0(b)=\beta$. If parameter $\lambda$
is small, then values $p_\lambda(a_1),\ldots,p_\lambda(a_m)$ are close to
$\alpha$ and among them are at least two different. Analogously, values
$p_\lambda(b_1),\ldots,p_\lambda(b_n)$ are close to $\beta$ and among them are
at least two different. But then a polynomials $p_\lambda$, $\lambda\ll 1$, has
at least 4 critical values. We have a contradiction. \end{proof}

\begin{cor} Trees $T_3$ and $T_5$ cannot be Z-homotopic.\end{cor}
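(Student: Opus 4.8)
The plan is to apply the preceding Theorem directly, so the entire task reduces to reading off the combinatorial data of the two trees from their figures and checking that the hypotheses are met. First I would treat $T_5$. It is drawn as a single horizontal path on six vertices whose colours alternate black/white; the two endpoints have valence $1$ and the four interior vertices have valence $2$. Hence every vertex of $T_5$ has valence $\leqslant 2$, so $T_5$ is a \emph{chain} in the sense of the Definition above.

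Next I would examine $T_3$. Its central white vertex is joined to three black vertices (the two on the far left together with the central black vertex), so it has degree $3$; symmetrically, the central black vertex is joined to three white vertices (the central white one and the two on the far right), so it too has degree $3$. Thus $T_3$ simultaneously carries a white vertex of degree $\geqslant 3$ and a black vertex of degree $\geqslant 3$. Equivalently, one checks that the passport of $T_3$ is $\langle 3,1,1\,|\,3,1,1\rangle$, which exhibits a $3$ in each colour class.

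With these two observations the Theorem applies verbatim: a tree possessing both a white and a black vertex of degree $\geqslant 3$ cannot be Z-homotopic to a chain. Since $T_5$ is a chain and $T_3$ satisfies the degree hypothesis, $T_3$ and $T_5$ are not Z-homotopic. (It is irrelevant which tree is taken as $p_0$ and which as $p_1$, because the notion of Z-homotopy is symmetric under the reparametrisation $\lambda\mapsto 1-\lambda$.) There is essentially no obstacle in this argument; the only point requiring care is the correct bipartite reading of the figure for $T_3$ --- one must verify that the two vertices of degree $\geqslant 3$ genuinely occur and that they have \emph{different} colours, since the Theorem would not apply if the high-degree vertices all shared a single colour.
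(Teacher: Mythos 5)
Your proposal is correct and matches the paper's intended argument exactly: the Corollary is a direct application of the preceding Theorem, with $T_5$ verified to be a chain and $T_3$ verified to have both a white and a black vertex of degree $3$ (passport $\langle 3,1,1\,|\,3,1,1\rangle$). The paper also sketches an independent, more geometric proof in the section on degree-5 Zolotarev polynomials, but for the Corollary itself your reading is the same as the author's.
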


\section{Trees with six edges}

Below are all plane 6-edge trees up to mirror symmetry (the designation of
symmetrical tree is in brackets):
\[\begin{picture}(300,55) \put(0,50){\circle*{2}} \put(0,20){\circle*{2}}
\put(15,35){\circle{4}} \put(30,50){\circle*{2}} \put(30,20){\circle*{2}}
\put(40,35){\circle*{2}} \put(55,35){\circle{4}} \put(0,50){\line(1,-1){14}}
\put(0,20){\line(1,1){14}} \put(16,36){\line(1,1){14}}
\put(16,34){\line(1,-1){14}} \put(17,35){\line(1,0){36}} \put(12,5){$T_1$}

\put(80,35){\circle*{2}} \put(95,35){\circle{4}} \put(95,50){\circle*{2}}
\put(95,20){\circle*{2}} \put(110,35){\circle*{2}} \put(125,35){\circle{4}}
\put(140,35){\circle*{2}} \put(80,35){\line(1,0){13}}
\put(95,37){\line(0,1){13}} \put(95,33){\line(0,-1){13}}
\put(97,35){\line(1,0){26}} \put(127,35){\line(1,0){13}} \put(110,5){$T_2$}

\put(165,35){\circle*{2}} \put(180,35){\circle{4}} \put(180,50){\circle*{2}}
\put(180,20){\circle*{2}} \put(200,35){\circle*{2}} \put(215,50){\circle{4}}
\put(215,20){\circle{4}} \put(165,35){\line(1,0){13}}
\put(180,37){\line(0,1){13}} \put(180,33){\line(0,-1){13}}
\put(182,35){\line(1,0){18}} \put(200,35){\line(1,1){14}}
\put(200,35){\line(1,-1){14}} \put(190,5){$T_3$}

\put(240,35){\circle{4}} \put(255,35){\circle*{2}} \put(270,35){\circle{4}}
\put(270,50){\circle*{2}} \put(270,20){\circle*{2}} \put(285,35){\circle*{2}}
\put(300,35){\circle{4}} \put(242,35){\line(1,0){26}}
\put(270,37){\line(0,1){13}} \put(270,33){\line(0,-1){13}}
\put(272,35){\line(1,0){26}} \put(280,5){$T_4$} \end{picture}\]

\[\begin{picture}(345,55) \put(0,35){\circle{4}} \put(15,35){\circle*{2}}
\put(35,35){\circle{4}} \put(55,35){\circle*{2}} \put(70,35){\circle{4}}
\put(20,50){\circle*{2}} \put(50,50){\circle*{2}} \put(2,35){\line(1,0){31}}
\put(37,35){\line(1,0){31}} \put(20,50){\line(1,-1){14}}
\put(50,50){\line(-1,-1){14}} \put(32,5){$T_5$}

\put(95,50){\circle*{2}} \put(95,20){\circle*{2}} \put(110,35){\circle{4}}
\put(125,35){\circle*{2}} \put(140,35){\circle{4}} \put(155,50){\circle*{2}}
\put(155,20){\circle*{2}} \put(95,20){\line(1,1){14}}
\put(95,50){\line(1,-1){14}} \put(112,35){\line(1,0){26}}
\put(141,36){\line(1,1){14}} \put(141,34){\line(1,-1){14}} \put(122,5){$T_6$}

\put(180,35){\circle*{2}} \put(195,35){\circle{4}} \put(195,50){\circle*{2}}
\put(215,35){\circle*{2}} \put(215,20){\circle{4}} \put(230,35){\circle{4}}
\put(245,35){\circle*{2}} \put(180,35){\line(1,0){13}}
\put(195,37){\line(0,1){13}} \put(197,35){\line(1,0){31}}
\put(215,35){\line(0,-1){13}} \put(232,35){\line(1,0){13}} \put(200,5){$T_7$
($T_8$)}

\put(270,50){\circle*{2}} \put(270,20){\circle*{2}} \put(285,35){\circle{4}}
\put(300,35){\circle*{2}} \put(315,35){\circle{4}} \put(330,35){\circle*{2}}
\put(345,35){\circle{4}} \put(270,50){\line(1,-1){14}}
\put(270,20){\line(1,1){14}} \put(287,35){\line(1,0){26}}
\put(317,35){\line(1,0){26}} \put(300,5){$T_9$}
\end{picture}\]

\[\begin{picture}(290,55) \put(0,35){\circle{4}} \put(15,35){\circle*{2}}
\put(30,35){\circle{4}} \put(30,50){\circle*{2}} \put(45,35){\circle*{2}}
\put(60,35){\circle{4}} \put(75,35){\circle*{2}} \put(2,35){\line(1,0){26}}
\put(30,37){\line(0,1){13}} \put(32,35){\line(1,0){26}}
\put(62,35){\line(1,0){13}} \put(20,5){$T_{10}$ ($T_{11}$)}

\put(110,55){\circle{4}} \put(110,15){\circle{4}} \put(125,45){\circle*{2}}
\put(125,25){\circle*{2}} \put(140,35){\circle{4}} \put(160,35){\circle*{2}}
\put(175,35){\circle{4}} \put(111,54){\line(3,-2){28}}
\put(111,16){\line(3,2){28}} \put(142,35){\line(1,0){31}} \put(140,5){$T_{12}$}

\put(200,35){\circle*{2}} \put(215,35){\circle{4}} \put(230,35){\circle*{2}}
\put(245,35){\circle{4}} \put(260,35){\circle*{2}} \put(275,35){\circle{4}}
\put(290,35){\circle*{2}} \put(200,35){\line(1,0){13}}
\put(217,35){\line(1,0){26}} \put(247,35){\line(1,0){26}}
\put(277,35){\line(1,0){13}} \put(240,5){$T_{13}$} \end{picture}\]

By Theorem 1 from the previous section, trees $T_3$ and $T_{13}$, $T_7$ and
$T_{13}$, $T_8$ and $T_{13}$ are not Z-homotopic. However, there is one more
non-homotopic pair.

\begin{prop} Trees $T_6$ and $T_{12}$ are not Z-homotopic.\end{prop}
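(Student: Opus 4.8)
The plan is to attach to each of the two Chebyshev polynomials the list of Zolotarev passports it can acquire under an infinitesimal deformation, and to show that these two lists are disjoint. Together with Lemma 1, which forces the passport of the intermediate Zolotarev polynomials to be the same for all $\lambda\in(0,1)$, this will rule out a Z-homotopy. First I would record the critical multiplicities, i.e. the degrees of the vertices of valence $\geqslant 2$. For $T_6$ the two white vertices have degree $3$ and the one interior black vertex has degree $2$, so the white critical multiplicities are $\{3,3\}$ and the black one is $\{2\}$. For $T_{12}$ the central white vertex has degree $3$ and the three interior black vertices have degree $2$, so the white multiplicity is $\{3\}$ and the black ones are $\{2,2,2\}$. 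In both cases $\deg p=6$, and one checks $\sum_v(\deg v-1)=5=\deg p'$.

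Next I would analyse the degeneration $p_\lambda\to p_0$ as $\lambda\to 0$. Because the three critical values of a Zolotarev polynomial are continuous in $\lambda$ and must coalesce into the two critical values of the Chebyshev limit, exactly one of the two Chebyshev values is the limit of two distinct Zolotarev values; call it the value that \emph{splits}. The Zolotarev critical points collapsing onto a given critical point $v$ of $p_0$ have total $p_0'$-multiplicity $\deg v-1$, so a vertex of degree $2$ (a simple root of $p_0'$) cannot split; hence the splitting value must be the one carrying a vertex of degree $\geqslant 3$. For $T_6$ this forces the \emph{white} value to split while the single black critical point stays put. For $T_{12}$ there are two options: either the white value splits, which forces the degree-$3$ vertex to break into two degree-$2$ critical points, or the black value splits, which merely distributes the three degree-$2$ vertices among two values. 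Whenever a single critical point breaks into several, Lemma 2 guarantees that the resulting critical values are pairwise distinct, and this is exactly what pins down the grouping of multiplicities in the passport.

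Carrying out this bookkeeping, I expect to obtain that the Zolotarev passports compatible with $T_6$ are precisely $\langle 3\,|\,3\,|\,2\rangle$, $\langle 3,2\,|\,2\,|\,2\rangle$ and $\langle 2,2\,|\,2,2\,|\,2\rangle$, while those compatible with $T_{12}$ are precisely $\langle 3\,|\,2\,|\,2,2\rangle$ and $\langle 2\,|\,2\,|\,2,2,2\rangle$. The two lists are disjoint. Since Lemma 1 makes the passport of $p_\lambda$ constant on $(0,1)$, the same passport would have to occur as a $\lambda\to 0$ limit (hence in the first list) and as a $\lambda\to 1$ limit (hence in the second list), which is impossible; therefore $T_6$ and $T_{12}$ are not Z-homotopic.

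The main obstacle is the completeness of the enumeration, not any single computation. I must be sure that the continuity-plus-multiplicity argument captures \emph{every} way a Zolotarev family can limit onto each tree: that the splitting value is uniquely determined by the degree bookkeeping, and that the only remaining freedom is how the critical points of the splitting colour are apportioned among the two limiting values, with Lemma 2 dictating the grouping. A point worth stressing is that the flattened multiplicity multisets do \emph{not} separate the two trees — both families can realise $\{3,2,2,2\}$ (as $\langle 3,2\,|\,2\,|\,2\rangle$ versus $\langle 3\,|\,2\,|\,2,2\rangle$) and $\{2,2,2,2,2\}$ (as $\langle 2,2\,|\,2,2\,|\,2\rangle$ versus $\langle 2\,|\,2\,|\,2,2,2\rangle$). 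Hence the argument genuinely needs the full passport, i.e. the partition of the multiplicities by critical value, which is exactly the invariant that Lemma 1 keeps fixed.
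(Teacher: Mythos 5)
Your proposal is correct and follows essentially the same route as the paper: both arguments use Lemma~1 to fix the passport of the intermediate Zolotarev family and Lemma~2 to force distinct values on the critical points born from a splitting vertex, thereby pinning down the admissible passports at each end ($\langle 3,2\,|\,2\,|\,2\rangle$ and $\langle 2,2\,|\,2,2\,|\,2\rangle$ from the $T_6$ side versus $\langle 3\,|\,2,2\,|\,2\rangle$ and $\langle 2,2,2\,|\,2\,|\,2\rangle$ from the $T_{12}$ side) and observing that the two lists are disjoint. Your enumeration is in fact slightly more complete, since you also record the passport $\langle 3\,|\,3\,|\,2\rangle$ (neither degree-$3$ vertex of $T_6$ splits), a case the paper's two-way split omits but which is likewise impossible for $T_{12}$ because such a family has only three distinct critical points while $T_{12}$ requires at least four.
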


\begin{proof} Let the opposite be true and let $a$ and $b$ be white vertices of
degree 3 of the tree $T_6$.
\par\noindent
\underline{The first case.} Let polynomials $p_\lambda$ have a critical point
$a_\lambda$ of multiplicity 3, all other critical points are of multiplicity 2.
Thus the vertex $b$ generates two critical points $b_1$ and $b_2$ of
multiplicity 2, $p_\lambda(b_1)\neq p_\lambda(b_2)$ and value $p_\lambda(a)$
coincides with value $p_\lambda(b_1)$ or with values $p_\lambda(b_2)$. But then
tree $T_{12}$ has a white vertex of degree 2 except white vertex of degree 3.
\par\noindent
\underline{The second case.} Polynomials $p_\lambda$ have critical points only
of multiplicity 2. Thus vertices $a$ and $b$ generate critical points $a_1,a_2$
and $b_1,b_2$, respectively. Moreover, $p_\lambda(a_1)=p_\lambda(b_1)$,
$p_\lambda(a_2)=p_\lambda(b_2)$ and $p_\lambda(b_1)\neq p_\lambda(b_2)$. Let
the fifth critical point be $c=c_\lambda$. The vertex of $T_{12}$ of degree 3
cannot be generated by junction of points $a_1$ and $b_1$ (or $a_2$ and $b_2$),
because otherwise during the change of parameter $\lambda$ from 1 to 0 the
vertex of degree 3 of $T_{12}$ generates two critical points with same values.
Also, this vertex cannot be generated by junction of points $c$ and $a_1$ (for
example), because then $T_{12}$ has a vertex of degree 3 and a vertex of degree
2 of the same color. \end{proof}

All other pairs of trees are Z-homotopic. The construction of corresponding
Z-homotopy usually is quite straightforward. Let us describe some interesting
cases.
\begin{itemize}
    \item Tree $T_4$ and tree $T_{12}$. Let degree 2 vertices of $T_4$ be in
    points $\pm 1$, its degree 4 vertex --- in origin, degree 3 vertex of
    $T_{12}$ --- in origin and its degree 2 vertices --- in cubic roots of 1.

    Let us consider the polynomial $p=\int x^2(x-1)(x-a)(x-b)\,dx$ with
    condition $p(a)=p(b)$. Then $p$ is a Zolotarev polynomial with passport
    $\langle 3\,|\,2,2\,|\,2\rangle$. If $a=0$ and $b=-1$, then $p$ degenerates
    into Chebyshev polynomial that corresponds to the tree $T_4$. The change of
    parameter $a$ from $0$ to $-i$, to $2-i$, to $2$, to $2+\sqrt 3\,i/2$ and
    to $(-1+\sqrt 3\,i)/2$ induces the change of the parameter $b$
    from $-1$ to $(-1-\sqrt 3\,i)/2$.
    \item Tree $T_{10}$ and tree $T_{13}$. Let degree 3 vertex of $T_{10}$ be
    in origin, its degree 2 vertices --- in points $1$, $a_1\approx 1.57-0.03\,i$
    and $b_1\approx -0.57+0.58\,i$, degree 2 vertices of $T_{13}$ --- in points
    $0$, $\pm 1$ ¨ $\pm\sqrt{3}$.

    Let us consider the polynomial $p=\int x(x-1)(x-a)(x-b)(x-c)\,dx$ with
    conditions $p(a)=0$ and $p(b)=p(c)$. Then $p$ is a Zolotarev polynomial with
    passport $\langle 2,2\,|\,2,2\,|\,2\rangle$. If $a=a_1$, $b=b_1$ and $c=0$,
    then $p$ degenerates into Chebyshev polynomial that corresponds to the tree
    $T_{10}$. The change of parameter $b$ from $b_1$ to $-1$ induces the
    change of the parameter $a$ from $a_1$ to $\sqrt 3$ and the change of the
    parameter $c$ from $0$ to $-\sqrt 3$ (here $c$ moves along the arc in the lower
    half plane).
    \item Tree $T_{12}$ and tree $T_{13}$. Let degree 3 vertex of $T_{12}$ be
    in the point $i/\sqrt 3$, its degree 2 vertices --- in points $\pm 1$ and
    $\sqrt 3\,i$,  degree 2 vertices of $T_{13}$ --- in points
    $0$, $\pm 1$ ¨ $\pm 1/\sqrt{3}$.

    Let us consider the polynomial $p=\int (x^2-1)(x-a)(x-b)(x-c)\,dx$ with
    conditions $p(-1)=p(1)=p(c)$. Then $p$ is a Zolotarev polynomial with
    passport $\langle 2,2,2\,|\,2\,|\,2\rangle$. If $a=b=i/\sqrt 3$ and
    $c=\sqrt 3\,i$, then $p$ degenerates into Chebyshev polynomial that
    corresponds to the tree $T_{12}$. The change of parameter $a$ from
    $i/\sqrt 3$ to $1/\sqrt 3$ induces the change of the parameter $b$ from
    $i/\sqrt 3$ to $-1/\sqrt 3$ and the change of the parameter $c$ from
    $\sqrt 3\,i$ to $0$.
\end{itemize}

\section{Trees with seven edges}

Zolotarev polynomials of degree 7 with passport $\langle
2,2\,|\,2,2\,|\,2,2\rangle$ give a nontrivial example of absence of Z-homotopy
(nontrivial in the sense, that this absence cannot be explained by Lemma 2 or
Theorem 1). Without loss of generality we can assume, that the first critical
value is 0 and that corresponding critical points are 0 and 1. Then such
polynomial is of the form
$$p(x)=\int x(x-1)(x-a)(x-b)(x-c)(x-d)\,dx,$$  where
$$p(1)=0,\quad p(a)=p(b),\quad p(c)=p(d).$$ Algebraic variety $C$ in
4-dimensional space with coordinates $a,b,c,d$, defined by these conditions, is
reducible: it is the union of two components $C=C_1\cup C_2$ of degrees 8 and
16, respectively. Trees (up to mirror symmetry), that correspond to Zolotarev
polynomials from the first component, can be seen in the picture below:
\[\begin{picture}(340,70) \put(0,35){\circle*{2}} \put(10,5){\circle*{2}}
\put(10,15){\circle{4}} \put(10,25){\circle*{4}} \put(10,35){\circle{4}}
\put(10,45){\circle*{4}} \put(20,35){\circle*{2}} \put(30,35){\circle{4}}
\put(40,35){\circle*{4}} \put(50,25){\circle*{2}} \put(50,35){\circle{4}}
\put(50,45){\circle*{2}} \put(50,55){\circle{4}} \put(50,65){\circle*{4}}
\put(60,35){\circle*{4}} \put(0,35){\line(1,0){8}} \put(10,5){\line(0,1){8}}
\put(10,17){\line(0,1){16}} \put(10,37){\line(0,1){8}}
\put(12,35){\line(1,0){16}} \put(32,35){\line(1,0){16}}
\put(50,25){\line(0,1){8}} \put(50,37){\line(0,1){16}}
\put(50,57){\line(0,1){8}} \put(52,35){\line(1,0){8}}

\put(80,35){\circle*{2}} \put(90,35){\circle{4}} \put(100,35){\circle*{4}}
\put(110,35){\circle{4}} \put(110,25){\circle*{2}} \put(110,45){\circle*{2}}
\put(110,55){\circle{4}} \put(110,65){\circle*{4}} \put(120,35){\circle*{4}}
\put(130,35){\circle{4}} \put(140,35){\circle*{2}} \put(150,35){\circle{4}}
\put(150,25){\circle*{4}} \put(150,45){\circle*{4}} \put(160,35){\circle*{2}}
\put(80,35){\line(1,0){8}} \put(92,35){\line(1,0){16}}
\put(110,25){\line(0,1){8}} \put(110,37){\line(0,1){16}}
\put(110,57){\line(0,1){8}} \put(112,35){\line(1,0){16}}
\put(132,35){\line(1,0){16}} \put(150,25){\line(0,1){8}}
\put(150,37){\line(0,1){8}} \put(152,35){\line(1,0){8}}

\put(180,35){\circle*{2}} \put(190,35){\circle{4}} \put(200,35){\circle*{4}}
\put(210,35){\circle{4}} \put(210,25){\circle*{2}} \put(210,45){\circle*{2}}
\put(210,55){\circle{4}} \put(210,65){\circle*{2}} \put(200,55){\circle*{4}}
\put(220,55){\circle*{4}} \put(220,35){\circle*{4}} \put(230,35){\circle{4}}
\put(240,35){\circle*{2}} \put(250,35){\circle{4}} \put(260,35){\circle*{4}}
\put(180,35){\line(1,0){8}} \put(192,35){\line(1,0){16}}
\put(210,25){\line(0,1){8}} \put(210,37){\line(0,1){16}}
\put(210,57){\line(0,1){8}} \put(200,55){\line(1,0){8}}
\put(212,55){\line(1,0){8}} \put(212,35){\line(1,0){16}}
\put(232,35){\line(1,0){16}} \put(252,35){\line(1,0){8}}

\put(280,35){\circle*{2}} \put(280,55){\circle*{2}} \put(290,25){\circle*{4}}
\put(290,35){\circle{4}} \put(290,45){\circle*{4}} \put(290,55){\circle{4}}
\put(290,65){\circle*{4}} \put(300,35){\circle*{2}} \put(300,55){\circle*{2}}
\put(310,35){\circle{4}} \put(310,55){\circle{4}} \put(320,35){\circle*{4}}
\put(320,55){\circle*{4}} \put(330,35){\circle{4}} \put(340,35){\circle*{2}}
\put(280,35){\line(1,0){8}} \put(280,55){\line(1,0){8}}
\put(290,25){\line(0,1){8}} \put(290,37){\line(0,1){16}}
\put(290,57){\line(0,1){8}} \put(292,35){\line(1,0){16}}
\put(292,55){\line(1,0){16}} \put(312,35){\line(1,0){16}}
\put(312,55){\line(1,0){8}} \put(332,35){\line(1,0){8}} \end{picture}\] The
order of monodromy group of Zolotarev polynomials from $C_1$ is 168.

Trees (up to mirror symmetry), that correspond to Zolotarev polynomials from
the second component, can be seen in the picture below:
\[\begin{picture}(340,30) \put(0,15){\circle*{4}} \put(10,15){\circle{4}}
\put(10,5){\circle*{2}} \put(10,25){\circle*{2}} \put(20,15){\circle*{4}}
\put(30,15){\circle{4}} \put(40,15){\circle*{2}} \put(50,15){\circle{4}}
\put(60,15){\circle*{4}} \put(70,15){\circle{4}} \put(80,15){\circle*{2}}
\put(90,15){\circle{4}} \put(90,5){\circle*{4}} \put(90,25){\circle*{4}}
\put(100,15){\circle*{2}} \put(0,15){\line(1,0){8}} \put(10,5){\line(0,1){8}}
\put(10,17){\line(0,1){8}} \put(12,15){\line(1,0){16}}
\put(32,15){\line(1,0){16}} \put(52,15){\line(1,0){16}}
\put(72,15){\line(1,0){16}} \put(90,5){\line(0,1){8}}
\put(90,17){\line(0,1){8}} \put(92,15){\line(1,0){8}}

\put(120,15){\circle*{4}} \put(130,15){\circle{4}} \put(140,15){\circle*{2}}
\put(150,15){\circle{4}} \put(150,5){\circle*{4}} \put(150,25){\circle*{4}}
\put(160,15){\circle*{2}} \put(170,15){\circle{4}} \put(180,15){\circle*{4}}
\put(190,15){\circle{4}} \put(190,5){\circle*{2}} \put(190,25){\circle*{2}}
\put(200,15){\circle*{4}} \put(210,15){\circle{4}} \put(220,15){\circle*{2}}
\put(120,15){\line(1,0){8}} \put(132,15){\line(1,0){16}}
\put(150,5){\line(0,1){8}} \put(150,17){\line(0,1){8}}
\put(152,15){\line(1,0){16}} \put(172,15){\line(1,0){16}}
\put(190,5){\line(0,1){8}} \put(190,17){\line(0,1){8}}
\put(192,15){\line(1,0){16}} \put(212,15){\line(1,0){8}}

\put(240,15){\circle*{4}} \put(250,15){\circle{4}} \put(250,5){\circle*{2}}
\put(250,25){\circle*{2}} \put(260,15){\circle*{4}} \put(270,15){\circle{4}}
\put(280,15){\circle*{2}} \put(290,15){\circle{4}} \put(290,5){\circle*{4}}
\put(290,25){\circle*{4}} \put(300,15){\circle*{2}} \put(310,15){\circle{4}}
\put(320,15){\circle*{4}} \put(330,15){\circle{4}} \put(340,15){\circle*{2}}
\put(240,15){\line(1,0){8}} \put(250,5){\line(0,1){8}}
\put(250,17){\line(0,1){8}} \put(252,15){\line(1,0){16}}
\put(272,15){\line(1,0){16}} \put(292,15){\line(1,0){16}}
\put(312,15){\line(1,0){16}} \put(290,5){\line(0,1){8}}
\put(290,17){\line(0,1){8}} \put(332,15){\line(1,0){8}} \end{picture}\]

\[\begin{picture}(280,50) \put(0,15){\circle*{2}} \put(10,15){\circle{4}}
\put(10,5){\circle*{4}} \put(10,25){\circle*{4}} \put(20,15){\circle*{2}}
\put(30,15){\circle{4}} \put(40,15){\circle*{4}} \put(50,15){\circle{4}}
\put(60,15){\circle*{2}} \put(70,15){\circle{4}} \put(70,5){\circle*{4}}
\put(70,25){\circle*{4}} \put(70,35){\circle{4}} \put(70,45){\circle*{2}}
\put(80,15){\circle*{2}} \put(0,15){\line(1,0){8}} \put(10,5){\line(0,1){8}}
\put(10,17){\line(0,1){8}} \put(12,15){\line(1,0){16}}
\put(32,15){\line(1,0){16}} \put(52,15){\line(1,0){16}}
\put(70,5){\line(0,1){8}} \put(70,17){\line(0,1){16}}
\put(70,37){\line(0,1){8}} \put(72,15){\line(1,0){8}}

\put(100,15){\circle*{2}}  \put(110,15){\circle{4}} \put(120,15){\circle*{4}}
\put(130,15){\circle{4}} \put(140,15){\circle*{2}} \put(150,15){\circle{4}}
\put(160,15){\circle*{4}} \put(170,15){\circle{4}} \put(170,5){\circle*{2}}
\put(180,15){\circle*{4}} \put(170,25){\circle*{2}} \put(170,35){\circle{4}}
\put(160,35){\circle*{4}} \put(170,45){\circle*{2}} \put(180,35){\circle*{4}}
\put(100,15){\line(1,0){8}} \put(112,15){\line(1,0){16}}
\put(132,15){\line(1,0){16}} \put(152,15){\line(1,0){16}}
\put(170,5){\line(0,1){8}} \put(172,15){\line(1,0){8}}
\put(170,17){\line(0,1){16}} \put(160,35){\line(1,0){8}}
\put(170,37){\line(0,1){8}} \put(172,35){\line(1,0){8}}

\put(200,15){\circle*{2}} \put(210,15){\circle{4}} \put(210,5){\circle*{4}}
\put(210,25){\circle*{4}} \put(220,15){\circle*{2}} \put(230,15){\circle{4}}
\put(230,5){\circle*{4}} \put(230,25){\circle*{4}} \put(230,35){\circle{4}}
\put(230,45){\circle*{2}} \put(240,15){\circle*{2}} \put(250,15){\circle{4}}
\put(260,15){\circle*{4}} \put(270,15){\circle{4}} \put(280,15){\circle*{2}}
\put(200,15){\line(1,0){8}} \put(210,5){\line(0,1){8}}
\put(210,17){\line(0,1){8}} \put(212,15){\line(1,0){16}}
\put(230,5){\line(0,1){8}} \put(230,17){\line(0,1){16}}
\put(230,37){\line(0,1){8}} \put(232,15){\line(1,0){16}}
\put(252,15){\line(1,0){16}} \put(272,15){\line(1,0){8}}  \end{picture}\]

\[\begin{picture}(340,50)\put(0,15){\circle*{2}} \put(10,15){\circle{4}}
\put(10,5){\circle*{4}} \put(10,25){\circle*{4}} \put(10,35){\circle{4}}
\put(10,45){\circle*{2}} \put(20,15){\circle*{2}} \put(30,15){\circle{4}}
\put(30,5){\circle*{4}} \put(30,25){\circle*{4}} \put(40,15){\circle*{2}}
\put(50,15){\circle{4}} \put(60,15){\circle*{4}} \put(70,15){\circle{4}}
\put(80,15){\circle*{2}} \put(0,15){\line(1,0){8}} \put(30,5){\line(0,1){8}}
\put(30,17){\line(0,1){8}} \put(12,15){\line(1,0){16}}
\put(10,5){\line(0,1){8}} \put(10,17){\line(0,1){16}}
\put(10,37){\line(0,1){8}} \put(32,15){\line(1,0){16}}
\put(52,15){\line(1,0){16}} \put(72,15){\line(1,0){8}}

\put(100,15){\circle*{2}} \put(110,15){\circle{4}} \put(110,5){\circle*{4}}
\put(110,25){\circle*{4}} \put(110,35){\circle{4}} \put(110,45){\circle*{2}}
\put(120,15){\circle*{2}} \put(130,15){\circle{4}} \put(140,15){\circle*{4}}
\put(150,15){\circle{4}} \put(150,5){\circle*{2}} \put(150,25){\circle*{2}}
\put(150,35){\circle{4}} \put(150,45){\circle*{4}} \put(160,15){\circle*{4}}
\put(100,15){\line(1,0){8}} \put(110,5){\line(0,1){8}}
\put(110,17){\line(0,1){16}} \put(110,37){\line(0,1){8}}
\put(112,15){\line(1,0){16}} \put(132,15){\line(1,0){16}}
\put(150,5){\line(0,1){8}} \put(150,17){\line(0,1){16}}
\put(150,37){\line(0,1){8}} \put(152,15){\line(1,0){8}}

\put(200,15){\circle*{4}} \put(180,35){\circle*{2}} \put(190,35){\circle{4}}
\put(200,35){\circle*{4}} \put(210,5){\circle*{2}} \put(210,15){\circle{4}}
\put(210,25){\circle*{2}} \put(210,35){\circle{4}} \put(210,45){\circle*{2}}
\put(220,15){\circle*{4}} \put(220,35){\circle*{4}} \put(230,15){\circle{4}}
\put(240,15){\circle*{2}} \put(250,15){\circle{4}} \put(260,15){\circle*{4}}
\put(180,35){\line(1,0){8}} \put(192,35){\line(1,0){16}}
\put(200,15){\line(1,0){8}} \put(210,5){\line(0,1){8}}
\put(210,17){\line(0,1){16}} \put(210,37){\line(0,1){8}}
\put(212,35){\line(1,0){8}} \put(212,15){\line(1,0){16}}
\put(232,15){\line(1,0){16}} \put(252,15){\line(1,0){8}}

\put(280,15){\circle*{2}} \put(290,5){\circle*{4}} \put(290,15){\circle{4}}
\put(290,25){\circle*{4}} \put(290,35){\circle{4}} \put(290,45){\circle*{2}}
\put(300,15){\circle*{2}} \put(310,5){\circle*{4}} \put(310,15){\circle{4}}
\put(310,25){\circle*{4}} \put(310,35){\circle{4}} \put(310,45){\circle*{2}}
\put(320,15){\circle*{2}} \put(330,15){\circle{4}} \put(340,15){\circle*{4}}
\put(280,15){\line(1,0){8}} \put(290,5){\line(0,1){8}}
\put(290,17){\line(0,1){16}} \put(290,37){\line(0,1){8}}
\put(292,15){\line(1,0){16}} \put(310,5){\line(0,1){8}}
\put(310,17){\line(0,1){16}} \put(310,37){\line(0,1){8}}
\put(312,15){\line(1,0){16}} \put(332,15){\line(1,0){8}}
\end{picture}\]

\[\begin{picture}(330,70) \put(0,15){\circle*{2}} \put(10,15){\circle{4}}
\put(10,5){\circle*{4}} \put(10,25){\circle*{4}} \put(20,15){\circle*{2}}
\put(30,15){\circle{4}} \put(40,15){\circle*{4}} \put(50,15){\circle{4}}
\put(50,5){\circle*{2}} \put(60,15){\circle*{4}} \put(50,25){\circle*{2}}
\put(50,35){\circle{4}} \put(50,45){\circle*{4}} \put(50,55){\circle{4}}
\put(50,65){\circle*{2}} \put(0,15){\line(1,0){8}} \put(10,5){\line(0,1){8}}
\put(10,17){\line(0,1){8}} \put(12,15){\line(1,0){16}}
\put(32,15){\line(1,0){16}} \put(50,5){\line(0,1){8}}
\put(52,15){\line(1,0){8}} \put(50,17){\line(0,1){16}}
\put(50,37){\line(0,1){16}} \put(50,57){\line(0,1){8}}

\put(90,35){\circle*{4}} \put(100,35){\circle{4}} \put(100,25){\circle*{4}}
\put(100,45){\circle*{4}} \put(110,35){\circle*{2}} \put(120,35){\circle{4}}
\put(120,25){\circle*{4}} \put(120,15){\circle{4}} \put(120,5){\circle*{2}}
\put(120,45){\circle*{4}} \put(120,55){\circle{4}} \put(120,65){\circle*{2}}
\put(130,35){\circle*{2}} \put(140,35){\circle{4}} \put(150,35){\circle*{4}}
\put(90,35){\line(1,0){8}} \put(100,25){\line(0,1){8}}
\put(100,37){\line(0,1){8}} \put(102,35){\line(1,0){16}}
\put(120,5){\line(0,1){8}} \put(120,17){\line(0,1){16}}
\put(120,37){\line(0,1){16}} \put(120,57){\line(0,1){8}}
\put(122,35){\line(1,0){16}} \put(142,35){\line(1,0){8}}

\put(180,35){\circle*{2}} \put(190,35){\circle{4}} \put(190,5){\circle*{2}}
\put(190,15){\circle{4}} \put(190,25){\circle*{4}} \put(190,45){\circle*{4}}
\put(200,35){\circle*{2}} \put(210,35){\circle{4}} \put(210,25){\circle*{4}}
\put(210,45){\circle*{4}} \put(210,55){\circle{4}} \put(210,65){\circle*{2}}
\put(220,35){\circle*{2}} \put(230,35){\circle{4}} \put(240,35){\circle*{4}}
\put(180,35){\line(1,0){8}} \put(190,5){\line(0,1){8}}
\put(190,17){\line(0,1){16}} \put(190,37){\line(0,1){8}}
\put(192,35){\line(1,0){16}} \put(210,25){\line(0,1){8}}
\put(210,37){\line(0,1){16}} \put(210,57){\line(0,1){8}}
\put(212,35){\line(1,0){16}} \put(232,35){\line(1,0){8}}

\put(270,35){\circle*{2}} \put(280,5){\circle*{2}} \put(280,15){\circle{4}}
\put(280,25){\circle*{4}} \put(280,35){\circle{4}} \put(280,45){\circle*{4}}
\put(280,55){\circle{4}} \put(280,65){\circle*{2}} \put(290,35){\circle*{2}}
\put(300,35){\circle{4}} \put(300,25){\circle*{4}} \put(300,45){\circle*{4}}
\put(310,35){\circle*{2}} \put(320,35){\circle{4}} \put(330,35){\circle*{4}}
\put(270,35){\line(1,0){8}} \put(280,5){\line(0,1){8}}
\put(280,17){\line(0,1){16}} \put(280,37){\line(0,1){16}}
\put(280,57){\line(0,1){8}} \put(282,35){\line(1,0){16}}
\put(300,25){\line(0,1){8}} \put(300,37){\line(0,1){8}}
\put(302,35){\line(1,0){16}} \put(322,35){\line(1,0){8}} \end{picture}\] The
order of monodromy group of Zolotarev polynomials from $C_2$ is 2520.

The intersection $C_1\cap C_2$ consists of Chebyshev polynomials that
correspond to trees
\[\begin{picture}(150,20) \put(0,5){\circle*{2}} \put(10,5){\circle*{2}}
\put(10,15){\circle*{2}} \put(20,5){\circle*{2}} \put(30,5){\circle*{2}}
\put(30,15){\circle*{2}} \put(40,5){\circle*{2}} \put(50,5){\circle*{2}}
\put(0,5){\line(1,0){50}} \put(10,5){\line(0,1){10}} \put(30,5){\line(0,1){10}}

\put(70,3){or}

\put(100,5){\circle*{2}} \put(110,5){\circle*{2}} \put(120,15){\circle*{2}}
\put(120,5){\circle*{2}} \put(130,5){\circle*{2}} \put(140,15){\circle*{2}}
\put(140,5){\circle*{2}} \put(150,5){\circle*{2}} \put(100,5){\line(1,0){50}}
\put(120,5){\line(0,1){10}} \put(140,5){\line(0,1){10}} \end{picture}\]

However, the component $C_1$ contains Chebyshev polynomials that correspond to
trees
\[\begin{picture}(260,20) \put(0,5){\circle*{2}} \put(10,5){\circle*{2}}
\put(10,15){\circle*{2}} \put(20,5){\circle*{2}} \put(30,5){\circle*{2}}
\put(30,15){\circle*{2}} \put(40,5){\circle*{2}} \put(50,5){\circle*{2}}
\put(0,5){\line(1,0){50}} \put(10,5){\line(0,1){10}} \put(30,5){\line(0,1){10}}

\put(70,5){\circle*{2}} \put(80,5){\circle*{2}} \put(90,15){\circle*{2}}
\put(90,5){\circle*{2}} \put(100,5){\circle*{2}} \put(110,15){\circle*{2}}
\put(110,5){\circle*{2}} \put(120,5){\circle*{2}} \put(70,5){\line(1,0){50}}
\put(90,5){\line(0,1){10}} \put(110,5){\line(0,1){10}}

\put(140,5){\circle*{2}} \put(150,5){\circle*{2}} \put(150,15){\circle*{2}}
\put(160,5){\circle*{2}} \put(170,5){\circle*{2}} \put(170,15){\circle*{2}}
\put(180,5){\circle*{2}} \put(190,5){\circle*{2}} \put(140,5){\line(1,0){50}}
\put(160,5){\line(1,1){10}} \put(160,5){\line(-1,1){10}}

\put(210,5){\circle*{2}} \put(220,5){\circle*{2}} \put(230,15){\circle*{2}}
\put(230,5){\circle*{2}} \put(240,5){\circle*{2}} \put(250,15){\circle*{2}}
\put(250,5){\circle*{2}} \put(260,5){\circle*{2}} \put(210,5){\line(1,0){50}}
\put(240,5){\line(1,1){10}} \put(240,5){\line(-1,1){10}}
\end{picture}\] and the component $C_2$ contains Chebyshev polynomials, that
correspond to trees
\[\begin{picture}(280,20) \put(0,5){\circle*{2}} \put(10,5){\circle*{2}}
\put(10,15){\circle*{2}} \put(20,5){\circle*{2}} \put(30,5){\circle*{2}}
\put(30,15){\circle*{2}} \put(40,5){\circle*{2}} \put(50,5){\circle*{2}}
\put(0,5){\line(1,0){50}} \put(10,5){\line(0,1){10}} \put(30,5){\line(0,1){10}}

\put(70,5){\circle*{2}} \put(80,5){\circle*{2}} \put(90,15){\circle*{2}}
\put(90,5){\circle*{2}} \put(100,5){\circle*{2}} \put(110,15){\circle*{2}}
\put(110,5){\circle*{2}} \put(120,5){\circle*{2}} \put(70,5){\line(1,0){50}}
\put(90,5){\line(0,1){10}} \put(110,5){\line(0,1){10}}

\put(140,5){\circle*{2}} \put(150,5){\circle*{2}} \put(160,5){\circle*{2}}
\put(170,5){\circle*{2}} \put(180,5){\circle*{2}} \put(190,5){\circle*{2}}
\put(200,5){\circle*{2}} \put(190,15){\circle*{2}} \put(140,5){\line(1,0){60}}
\put(190,5){\line(0,1){10}}

\put(220,5){\circle*{2}} \put(230,5){\circle*{2}} \put(240,5){\circle*{2}}
\put(250,5){\circle*{2}} \put(260,5){\circle*{2}} \put(270,5){\circle*{2}}
\put(280,5){\circle*{2}} \put(250,15){\circle*{2}} \put(220,5){\line(1,0){60}}
\put(250,5){\line(0,1){10}} \end{picture}\]
\[\begin{picture}(240,30) \put(0,15){\circle*{2}} \put(10,15){\circle*{2}}
\put(20,15){\circle*{2}} \put(30,15){\circle*{2}} \put(40,15){\circle*{2}}
\put(50,15){\circle*{2}} \put(10,5){\circle*{2}} \put(10,25){\circle*{2}}
\put(0,15){\line(1,0){50}} \put(10,5){\line(0,1){20}}

\put(70,15){\circle*{2}} \put(80,15){\circle*{2}} \put(90,15){\circle*{2}}
\put(100,15){\circle*{2}} \put(110,15){\circle*{2}} \put(120,15){\circle*{2}}
\put(90,5){\circle*{2}} \put(90,25){\circle*{2}} \put(70,15){\line(1,0){50}}
\put(90,5){\line(0,1){20}}

\put(140,15){\circle*{2}} \put(150,15){\circle*{2}} \put(160,15){\circle*{2}}
\put(170,15){\circle*{2}} \put(180,15){\circle*{2}} \put(160,5){\circle*{2}}
\put(150,25){\circle*{2}} \put(170,25){\circle*{2}}
\put(140,15){\line(1,0){40}} \put(160,15){\line(1,1){10}}
\put(160,15){\line(-1,1){10}} \put(160,15){\line(0,-1){10}}

\put(200,15){\circle*{2}} \put(210,15){\circle*{2}} \put(220,15){\circle*{2}}
\put(230,15){\circle*{2}} \put(240,15){\circle*{2}} \put(220,25){\circle*{2}}
\put(210,25){\circle*{2}} \put(230,25){\circle*{2}}
\put(200,15){\line(1,0){40}} \put(220,15){\line(1,1){10}}
\put(220,15){\line(-1,1){10}} \put(220,15){\line(0,1){10}} \end{picture}\] Thus
we see that trees
\[\begin{picture}(140,30) \put(0,15){\circle*{2}} \put(10,15){\circle*{2}}
\put(20,15){\circle*{2}} \put(30,15){\circle*{2}} \put(40,15){\circle*{2}}
\put(50,15){\circle*{2}} \put(10,25){\circle*{2}} \put(30,25){\circle*{2}}
\put(0,15){\line(1,0){50}} \put(20,15){\line(1,1){10}}
\put(20,15){\line(-1,1){10}} \put(63,13){and}

\put(90,15){\circle*{2}} \put(100,15){\circle*{2}} \put(110,15){\circle*{2}}
\put(120,15){\circle*{2}} \put(130,15){\circle*{2}} \put(140,15){\circle*{2}}
\put(100,5){\circle*{2}} \put(100,25){\circle*{2}} \put(90,15){\line(1,0){50}}
\put(100,5){\line(0,1){20}} \end{picture}\] for example, are not Z-homotopic in
the class of Zolotarev polynomials with the passport $\langle
2,2\,|\,2,2\,|\,2,2\rangle$ (although they are Z-homotopic in the class with
the passport $\langle 4\,|\,2\,|\,2\rangle$).

\end{document}